\newtheorem{theorem}{Theorem}[section]
\newtheorem{proposition}[theorem]{Proposition}
\newtheorem{definition}[theorem]{Definition}
\newtheorem{corollary}[theorem]{Corollary}
\newcommand{\dif}{\mathrm{Diff}}
\newcommand{\difH}{\mathrm{Diff}(\Delta^H, M)}
\newcommand{\difV}{\mathrm{Diff}(\Delta^V, M)}
\newcommand{\R}{\mathbf R}
\newcommand{\N}{\mathbf N}
\newcommand{\Z}{\mathbf Z}
\begin{document}
\begin{center}


 {\Large {\bf Decomposition of stochastic flows generated \\[3mm] by Stratonovich SDEs
with jumps 
 }}

\end{center}

\vspace{0.1cm}

\begin{center}
{\large Alison M. Melo\footnote{E-mail: alison.melo@univasf.edu.br. 
Research
supported by CNPq 142084/2012-3.} \ \ \ \ \ \ \ \ \ \ \ \ \
 { Leandro Morgado}\footnote{E-mail:
leandro.morgado@ufsc.br.
Research supported by  FAPESP 11/14797-2.}

\medskip

{ Paulo R. Ruffino}\footnote{Corresponding author, e-mail:
ruffino@ime.unicamp.br.
Partially supported by FAPESP  12/18780-0,
11/50151-0 and CNPq 477861/2013-0.}}

\vspace{0.2cm}

\textit{Departamento de Matem\'{a}tica, Universidade Estadual de Campinas, \\
13.083-859- Campinas - SP, Brazil.}

\vspace{4mm}

\end{center}

\begin{abstract}

Consider a manifold $M$ endowed locally with
a pair of complementary distributions  $\Delta^H \oplus \Delta^V=TM$ and let
$\text{Diff}(\Delta^H,
M)$ and $\text{Diff}(\Delta^V, M)$ be the corresponding Lie subgroups
generated by vector fields in the
corresponding distributions. We decompose a stochastic flow with jumps, up
to a
stopping time, as $\varphi_t = \xi_t \circ \psi_t$, where $\xi_t \in
\text{Diff}(\Delta^H, M)$ and $\psi_t \in \text{Diff}(\Delta^V, M)$. Our main
result provides Stratonovich stochastic differential equations with jumps
for each of these two components in the corresponding infinite dimensional Lie
groups. We present an extension of the Itô-Ventzel-Kunita formula for
stochastic flows with jumps generated by classical Marcus equation (as in
Kurtz, Pardoux and Protter \cite{KPP}). The results here
correspond to an
extension of
Catuogno, da Silva and Ruffino
\cite{CSR2}, where this decomposition was studied for the continuous case.


\end{abstract}

\noindent {\bf Key words:} semimartingales with jumps, stochastic flows, smooth 
distributions, decomposition
of flows, group of diffeomorphisms, Marcus equation.

\vspace{0.3cm}
\noindent {\bf MSC2010 subject classification:} 60H10, 58J65, 58D05.

\section{Introduction}

Let $M$ be a compact differentiable manifold, with $\text{Diff}(M)$ the
corresponding infinite dimensional Lie group of diffeomorphisms of M generated
by the Lie algebra of smooth vector fields. Consider a stochastic flow
$\varphi_t$ of local diffeomorphisms in $M$, generated by a Stratonovich SDE
driven by a semimartingale $Z_t$
with jumps. The question we address in this paper is the possibility of
decomposing $\varphi_t$ in components that belong to specific subgroups of
$\text{Diff(M)}$, which provide dynamical or geometrical information of the
system. In the continuous case, decompositions of this kind for stochastic
flows has been studied in the literature, with many distinct frameworks and
with different aimed subgroups. Among others see  Bismut \cite{Bismut}, Kunita
\cite{Kunita-1}, \cite{Kunita-2},  Ming Liao \cite{ML} and some of our previous
work \cite{CSR}, \cite{Colonius-Ruffino}, \cite{Ruffino}. In the last few papers
mentioned, for example, geometrical conditions on a Riemannian manifold have
been stated to guarantee the existence of the decomposition where the first
component lies in the subgroup of isometries or affine transformations.

Our geometrical framework here is the following: consider a
manifold $M$ equipped with
a pair of complementary distributions  $\Delta^H$ and
$\Delta^V$ (horizontal and vertical, respectively) in the sense
of differentiable sections in  Grassmanian bundles of $M$. 
For each smooth 
vector field $X$ in $M$, denote by $\exp \{ tX\}\in \dif(M)$ the
associated flow.  Given a
distribution $\Delta$ in $M$, we shall denote by
$\dif(\Delta, M) $ the closure of the subgroup of diffeomorphisms
generated by exponentials
of vector fields in $\Delta$, precisely:
\begin{equation*}
\dif(\Delta, M)= \mathrm{cl} \Big\{  \exp \{t_1X_1\}\circ \ldots
\circ \exp \{t_nX_n\}, \mbox{\  with }  X_i \in
\Delta, 
t_i \in \R,  \mbox{ for all } n\in \N \Big\}.
\end{equation*}
The differentiable
structure of infinite dimensional Lie group of diffemorphisms of a compact
manifold $M$  is classically given
by maps in the locally convex space of smooth vector fields in $M$, more
precisely, maps in an ILB-space, see  Omori \cite{Omori}, Milnor
\cite{Milnor} and
more recently Neeb \cite{Neeb}. So that the closure above is taken with respect
to this topology.
Existence and uniqueness of solution of an ODE in an ILB-spaces with smooth
vector fields has being guaranteed by \cite[Thm
7.2, Chap.1]{Omori}. Hence, solutions of Stratonovich differential equations in
these spaces can be obtained via support theorem if the noise is finite
dimensional, see e.g. Twardowska \cite{Twardowska}. 
Note that, considering
the limits, by Baker-Campbell-Hausdorff
formula, this  subgroup includes flows of diffeomorphisms associated to $\mathrm{Lie}(\Delta)$, the Lie algebra generated by $\Delta$. Precisely, we have that  
\[
 \mathrm{Diff} (\Delta, M)= \mathrm{Diff}(\mathrm{Lie}( \Delta), M).
\]

Using this notation, the stochastic flow $\varphi_t$ belongs to
 $\mathrm{Diff}(TM,
M)$, which is a connected subgroup of $\mathrm{Diff}(M)$. Again,
$\mathrm{Diff}(TM,
M)$ contains the Lie subgroups $\difH$
and
$\difV$. 
If both distributions $\Delta^H$ and $\Delta^V$ are involutive, locally
the intersection of these two subgroups reduces to the identity, and the 
elements of
each of these
subgroups send leaves of the corresponding foliation into themselves. This also 
implies that if the decomposition exists, it is unique.

In this article we want to decompose a stochastic flow, up to a
stopping time, as $\varphi_t = \xi_t \circ \psi_t$, where $\xi_t \in
\text{Diff}(\Delta^H, M)$ and $\psi_t \in \text{Diff}(\Delta^V, M)$. Our main
result provides Stratonovich stochastic differential equations with jumps
for each of these two components in the corresponding infinite dimensional Lie
groups (Theorem \ref{thm: decomposicao}).
One of the key points on
obtaining the equations for each
component $\xi_t$ and $\psi_t$ is an
extension of the classical Itô-Ventzel-Kunita formula for discontinuous càdlàg
processes, which are also generated by SDEs with jumps (Theorem \ref{thm:
Ito-Ventzel}); here, we generalize the definition of the integration
with respect to a change of variables in Kurtz, Pardoux and Protter
\cite{KPP}.

A motivation for this decomposition appears in many  dynamical systems. For
instance, in a foliated space (say, assuming that the horizontal  distribution
is integrable) the decomposition
measures how far the trajectories are from being foliation-preserving:
namely,
the closer the second component $\psi_t$ is to the identity, closer the system
is to be (horizontal) foliation-preserving. Vice-versa, changing the order of
the vertical and horizontal distributions and the order of the components in
the decomposition, the same interpretation holds if the vertical component is
integrable. For example: a
principal bundle with a connection is a natural state
space with this feature, where the vertical distribution is determined by
the fibres and the horizontal distribution is given by the geometry. The
dynamics in averaging principles for transversal
perturbation in foliated systems also illustrates the relevance of this
decomposition, see e.g. the Liouville torus in Hamiltonian
systems in Li \cite{Li}, or in a general compact foliation in Gargate and Ruffino
\cite{Gargate-Ruffino}.

In this paper, we consider the Stratonovich SDEs driven by semimartingales with
jumps $Z_t$, in the sense of the so called Marcus equation (see e.g.
\cite{KPP}). This means, among
other aspects (recalled in Section 2), that the jumps of the solution occur in
the direction of the deterministic flow generated by the corresponding vector
fields. This property  guarantees that if the manifold $M$ is embedded in
an Euclidean space and the vector fields are in $TM$ then, after
the jumps, the solution flow remains in $M$. Also, we assume
that the number of jumps in $Z_t$ is finite in a bounded interval of time
a.s..
Hence, it includes as a possible integrator the so called L\'evy--jump diffusion
(see Applebaum \cite{ap}), but not L\'evy process in general. This idea of
finite jumps in bounded intervals has parallel in the theory of chain control
sets, see Colonius and Kliemann
\cite{Colonius-Kliemann}, Patr\~ao and San Martin \cite{sanmartin}, and references therein.

The paper is organized as follows: In Section 2, we describe some basic
properties and definitions of Stratonovich SDE driven by a semimartingale with
jumps, in the sense of Marcus equation. In Section 3, our
extension of the Itô-Ventzel-Kunita formula is presented, for two stochastic
flows generated by Marcus equations with respect to the same semimartingale
with jumps. Finally, in section 4, we deal with the
decomposition of the stochastic flow
$\varphi_t= \xi_t \circ \psi_t$,  obtaining the corresponding SDEs for each
component. The results here correspond to an extension of the article Catuogno,
da Silva and Ruffino
\cite{CSR2}, where the same decomposition is studied for continuous flows.

\section{Stratonovich SDEs with jumps}

For reader's convenience, we recall the definition of a Stratonovich SDE driven
by a semimartingale with jumps in $\R^d$, in the sense of Marcus equation (for
more
details, see e.g. Kurtz, Pardoux e Protter \cite{KPP}):

\begin{equation}\begin{array}{lll} \label{eq: define marcus}
dx_t = \displaystyle \sum_{i=1}^m X^i(x_t) \diamond dZ^i_t,
\end{array}
\end{equation}
with initial condition $x(0) = x_0$. The solution, in its integral
form is:
\begin{equation}\begin{array}{lll}
x_t  = x_0 + \displaystyle \sum_{i=1}^m \int_0^t X^i(X_s) \diamond dZ^i_s =:x_0
+ \displaystyle \int_0^t X(x_s) \diamond dZ_s,
\end{array}
\end{equation}
where $x_t$ is an adapted stochastic process taking values in $\R^d$, the
integrator $\{Z^i_s : s \geq 0\}$  is a semimartingale with jumps and $X^i$ are
vector fields in $\R^d$ for all $i \in \{1,\ldots,m\}$. For sake of
simplicity we use the matrix notation $X=(X^1, \ldots , X^m)$ and $Z=(Z^1,
\ldots , Z^m)$. The solution is
interpreted as a stochastic process that satisfies the equation:

\begin{equation}\begin{array}{lll} \label{eq: interpreta marcus}
x_t = x_0 + \displaystyle \int_0^t X(x_{s_-}) \ dZ_s \ + \ \frac{1}{2}
\displaystyle \int_0^t X'X(x_s) \ d[Z,Z]_s^c \  \vspace{0.2cm}\\
\hspace{1cm} + \displaystyle \sum_{0 < s \leq t} \left\{ \phi (X \Delta Z_s,
x_{s_-}) - x_{s_-} - X(x_{s_-}) \Delta Z_s \right\},
\end{array}
\end{equation}
in the following sense: the first term on the right hand side of $(\ref{eq:
interpreta marcus} )$ is a standard Itô integral of the predictable process
$X(x_{t_-})$ with respect to the semimartingale $Z_t$. The second term is a
Stieltjes integral with respect to the continuous part of the quadratic
variation of $Z_t$. In the third term:  $\phi (X \Delta Z_s, x_{s_-})$
indicates the solution at time one of the ODE generated by the vector field $X
\Delta Z_s$ and initial condition $x_{s_-}$. Thus, the
jumps of this equation occurs in deterministic directions. Regularity
conditions on vector fields $X$ implies that there exists a unique
stochastic flow of diffeomorphisms $\varphi_t$. Conditions on the derivatives of
the vector
fields
guarantee that there exists a stochastic flow of local diffeomorphisms.
Moreover, for an embedded submanifold $M$ in an Euclidean space, if the vector
fields of the equation (\ref{eq: define marcus}) are in $TM$ then, for each
initial condition on $M$, the solution stays in $M$ a.s., see \cite{KPP}.

Let $Y$ be smooth vector fields in $\R^d$, and consider $G_t$ a
$d$-dimensional process given by
$ d G_t = Y (G_t) \diamond d Z_t$. If $H \in C^1(\R^d; \R ^m)$) we recall the
definition of the
Stratonovich integral of $H (G_t)$ with respect to $Z_t$ (Definition 4.1 in
\cite{KPP}):
\begin{equation}
\begin{array}{lll} \label{eq: integral KPP}
\displaystyle \int_0^t H(G_s) \diamond d Z_s := \displaystyle \int_0^{t}
H(G_{s-}) d Z_s + \frac{1}{2} \ \mathrm{Tr} \displaystyle \int_0^{t} H' (G_s) d
[Z,Z]^c_s \ Y(G_s)^t \\
\hspace{2.6cm} + \displaystyle \sum_{0 < s \leq t} \left( \displaystyle
\int_0^1 \left[
H(\phi(Y \Delta Z_s, G_{s-}, u)) - H(G_{s-}) \right] du \right) \Delta Z_s,
\end{array}
\end{equation}
where $\phi(Y \Delta Z_s, G_{s-}, u)$ has the same interpretation as before:
it is the solution of the ODE generated by the vector field $Y \Delta Z_s$, and
initial condition $G_{s-}$, at time $t = u$. More specifically,
given $t_0 > 0$, the jump of the integral on the left hand side of equation
(\ref{eq: integral KPP}) is given by:
$$
\Delta \left( \displaystyle \int_0^t H(G_s) \diamond d Z_s \right)_{
\hspace{-1mm}t_0} =
\left( \displaystyle \int_0^1 H(\phi(Y \Delta Z_{t_0}, G_{t_{0-}}, u)) du
\right)
\Delta Z_{t_0}.
$$

The dynamical interpretation of the expression above is that, after opening a
unit interval with a  `fictitious curve' which connects $G_{t_{0-}}$ and
$G_{t_0}$, the jump of the integral is given by the mean of $H$ along this
curve multiplied by the jump of the semimartingale $Z_t$, see \cite{KPP}.

We  consider that the number of jumps in the integrator process $Z_t$
is finite in bounded intervals a.s.. For example, this hypothesis includes the
so called Levy-jump diffusion
$
Z^i_t =  B^i_t +  \sum_{k=0}^{N_t} J^i_k,
$
where $B^i_t$ is a Brownian motion, $N_t$ is a Poisson process and the random
variables $(J_k)$ are  i.i.d. (see e.g.
Applebaum \cite{ap}).

\section{An extension of Itô-Ventzel-Kunita formula}

In the continuous case, we have the well known Itô-Ventzel-Kunita formula:
Let $\varphi_t$ be a stochastic flow of (local) diffeomorphisms, which is
solution of the following Stratonovich SDE
$$dx_t = \displaystyle \sum_{i=1}^m X^i(x_t) \circ dN^i_t,$$
where $N_i$ are continuous semimartingales, and $X^i$ are smooth vector fields
in $\R^d$, for $1 \leq i\leq m $. Let $U_t = (U^1_t, 
\ldots, U^d_t)$
be a continuous semimartingale. Then:

\begin{theorem} (Itô-Ventzel-Kunita formula for continuous case) \label{Thm: Ito-Ventzel cont}
\begin{equation}\begin{array}{lll}
\varphi_t (U_t) = \varphi_0 (U_0) + \displaystyle \sum_{i=1}^m \displaystyle
\int_0^t X^i (\varphi_s(U_s)) \circ dN^i_s + \displaystyle \sum_{j=1}^d 
\displaystyle
\int_0^t \frac {\partial \varphi_s}{\partial x_j} (U_s) \circ d U^j_s.
\end{array}
\end{equation}
\end{theorem}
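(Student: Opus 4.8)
The plan is to prove this continuous-case It\^o-Ventzel-Kunita formula as a warm-up/baseline result before generalizing to jumps, so I would treat it as essentially a known statement and give a proof via the joint Stratonovich differential. The key observation is that $\varphi_t(U_t)$ is a composition of two semimartingales, and in the Stratonovich calculus the ordinary chain rule holds, so one expects
\[
d\bigl(\varphi_t(U_t)\bigr) = (d\varphi_t)(U_t) + \sum_{j=1}^d \frac{\partial \varphi_t}{\partial x_j}(U_t)\circ dU_t^j,
\]
where the first term is interpreted by substituting $x_t = U_t$ into the defining SDE for the flow, giving $\sum_i X^i(\varphi_t(U_t))\circ dN_t^i$. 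So the real content is justifying that this formal manipulation is legitimate.

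First I would pass from Stratonovich to It\^o form for both $\varphi_t$ (as a flow, using the standard correction term involving $X'X$) and for the process $t\mapsto \varphi_t(y)$ with $y$ frozen, and record the It\^o expansions of $\varphi_t$, its spatial derivatives $\partial\varphi_t/\partial x_j$, and $U_t$. Second, I would apply the classical (It\^o-form) Ventzel-Kunita formula for the composition of a flow with a semimartingale — this is available in Kunita's work and can be cited — which produces the It\^o integral $\int_0^t X(\varphi_s(U_s))\,dN_s$, the term $\sum_j \int_0^t \partial_j\varphi_s(U_s)\,dU_s^j$, plus quadratic-covariation correction terms coming from (a) the flow's own It\^o correction, (b) the cross-variation between $\partial_j\varphi_s$ and $U^j$, and (c) the second-order term $\tfrac12\sum_{j,k}\partial^2_{jk}\varphi_s(U_s)\,d[U^j,U^k]_s$. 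Third, I would convert the right-hand side back into Stratonovich form and check that all the correction terms cancel exactly — this is the bookkeeping heart of the argument and follows the same pattern as proving that the Stratonovich chain rule reduces to the ordinary one.

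The main obstacle is purely the cross-variation bookkeeping in the third step: one must verify that the It\^o correction internal to $d\varphi_t$ (built from $X'X$ and the quadratic variation of $N$), the correction converting $\int \partial_j\varphi_s(U_s)\circ dU_s^j$ to It\^o form, and the correction converting $\int X^i(\varphi_s(U_s))\circ dN_s^i$ to It\^o form together reproduce precisely the three correction terms (a)-(c) generated by the classical Ventzel-Kunita formula, with nothing left over. The computation uses that $\partial_j(X^i\circ\varphi_t) = \sum_k (\partial_k X^i)(\varphi_t)\,\partial_j\varphi_t^k$ (differentiating the flow equation in the spatial variable) and the symmetry of $[U^j,U^k]$. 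Once this is done for the continuous case, the analogous jump version (Theorem \ref{thm: Ito-Ventzel}) will require, in addition, matching the jump contributions: the jump of $\varphi_t(U_t)$ must be shown to equal the increment produced by the Marcus-type fictitious-curve prescription applied jointly to the flow jump and the jump of $U_t$, which is where the extended definition of the Stratonovich integral generalizing \cite{KPP} enters. I would defer that to the next section and here simply remark that the continuous formula is recovered from the jump formula by setting $\Delta Z \equiv 0$.
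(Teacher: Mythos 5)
Your outline is correct and is essentially the paper's own treatment: the paper offers no proof of this theorem, citing instead Kunita \cite[Thm 8.3]{Kunita-2}, and your plan --- invoke the classical It\^o-form Ventzel--Kunita composition formula and verify that the Stratonovich-to-It\^o correction terms cancel against the flow's $X'X$ correction, the cross-variation of $\partial_j\varphi$ with $U^j$, and the second-order term in $[U^j,U^k]$ --- is the standard derivation of the Stratonovich statement from that reference. The bookkeeping you defer is routine and is precisely what the cited source carries out, so nothing essential is missing relative to the paper.
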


For a proof, see e.g. Kunita \cite[Thm 8.3]{Kunita-2}). A direct
corollary of this result is a Leibniz formula for the
composition of stochastic flows of diffeomorphisms in the continuous case,
that is, $d(\varphi \circ F)_t = d \varphi_t \circ F_t + (\varphi_t)_*
\circ d F_t$, where the differential form is considered in the  Stratonovich
sense.

In the proof of the main result of the next section, one needs an extension of
this Leibniz formula for flows generated by Stratonovich SDE driven by
semimartingales with jumps. Precisely, let $\psi_t$ and $\xi_t$ be flows of
diffeomorphisms
generated by Marcus equations with respect to the same semimartingale with
jumps $Z_t$:
$$\diamond d \psi_t = X(\psi_t) \diamond d Z_t \ \ \ \text{and} \ \ \ \diamond
d \xi_t = Y (\xi_t) \diamond d Z_t.$$
where $X, Y$ are smooth vectors fields. We assume that $Z_t$ has the property
of finite jumps, as described in the previous section.  We
define the
following integral which generalizes the classical Marcus
integral ($\ref{eq: interpreta marcus}$), (i.e. \cite[Eq.
2.2]{KPP}):

\begin{equation}\begin{array}{lll} \label{eq: definicao}
\displaystyle \int_0^{t} {\psi_s}_*    Y (\xi_s) \diamond dZ_s &  \hspace{-2mm}
:= \hspace{-2mm} & \displaystyle
\int_0^{t} {\psi_s}_*  Y (\xi_{s^{-}})d Z_s 
 +  \displaystyle \frac{1}{2}
\int_0^{t} \big( X' (Y(\xi_s) ) + \psi_{s*} (Y'Y) \big)\, d\, [Z,Z]_s^c
 \vspace{3mm}\\
&& + \displaystyle \sum_{0 \leq s \leq t} \bigg\{ - {\psi_{s^-}}_* (Y(\xi_{s^{-}}))  \Delta Z_s  - \phi (X \Delta Z_{t_0},
\psi_{t_0^-}(\xi_{t_0^-}))
\vspace{0.3cm} \\
&& \hspace{34mm} +\  \phi \big( X
\Delta Z_s, \psi_{s^{-}} (\phi (Y \Delta
Z_s,  \xi_{s-})) \big)  \bigg\}
\end{array}
\end{equation}

The first term on the right hand side of (\ref{eq: definicao}) is the standard
Itô integral of the predictable process ${\psi_s}_* (Y(\xi_{s^{-}}))$. The
second term corresponds to the finite variation, such that the continuous part
of the expression satisfies the classical Itô-Ventzel-Kunita formula. In the summation, we use
the notation $\phi (X, x_0, u)$ for the solution of ordinary differential
equation with respect to the vector field $X$, and initial condition $x_0$ at
time $u$ (if omitted, it means that $u=1$). Note that if $\psi_s$ is the
identity for all $s\in [0,t]$, one recovers equation (\ref{eq: interpreta
marcus}).

With the same notation as before, our extension of
Itô-Ventzel-Kunita formula is given by

\begin{theorem} \label{thm: Ito-Ventzel} (Itô-Ventzel-Kunita for Stratonovich
SDE with jumps) Suppose that the stochastic flows $\psi_t$ and $\xi_t$ are
defined
in the interval $[0,a]$. Then, for all $t \in [0, a]$:
\begin{equation}\begin{array}{lll} \label{eq: integral generalizada}
\psi_t (\xi_t) = \psi_0 (\xi_0) + \displaystyle \int_0^t X(\psi_s(\xi_s)) \diamond d
Z_s + \displaystyle \int_0^{t} {\psi_s}_* (Y(\xi_s)) \diamond d Z_s.
\end{array}
\end{equation}

\begin{proof}
Consider 
$
 t_0 = \sup \ \{ s\in [0,a] \mbox{ such that the formula holds in } [0,s] \}.
$
We want to prove that $t_0 = a$. The proof follows by contradiction considering two cases: when the driven
process $Z_t$ is continuous at $t_0$, and when it jumps  at $t_0$. In the first
case, if $t_0 < a$, there exists a.s. an $\varepsilon > 0$ such that the process $Z_t$ is
continuous in $[t_0, t_0 + \varepsilon)$. So, by
It\^o-Ventzel formula for the continuous case (Thm. \ref{Thm: Ito-Ventzel cont}), the relation is valid up to time $t_0 +
\varepsilon$, hence we have a contradiction. 

On the other hand, if  the process $Z_t$ jumps at $t_0$, and $t_0 < a$, 
we prove that the relation is also valid for $t_0+ \varepsilon$, for a positive
$\varepsilon$. In fact, concatenating the jump of $\xi_{t_0}$ and the jump of $\psi_{t_0}$ we have that:
\begin{equation} \label{Eq: soma e subtrai}
\psi_{t_0} (\xi_{t_0})  =  \psi_{t_0^-} (\xi_{t_0^-}) + \phi \big(X \Delta
Z_s, \psi_{t_0^-} (\phi(Y \Delta Z_s, (\xi_{t_0^-})) \big) - \psi_{t_0^-}
(\xi_{t_0^-}). 
\end{equation}
Initially use that 
\[
\psi_{t_0^-} (\xi_{t_0^-})= \psi_0 (\xi_0) + \displaystyle \int_0^{t_0^-} X(\psi_s(\xi_s)) \diamond d Z_s +
\displaystyle \int_0^{t_0^-} {\psi_s}_* (Y(\xi_s)) \diamond d Z_s. 
\]
By definition,  
\begin{equation*}\begin{array}{lll}
\displaystyle \int_0^{t_0} X(\psi_s(\xi_s)) \diamond d Z_s &=& \displaystyle \int_0^{t_0^-} X(\psi_s(\xi_s)) \diamond d Z_s + \phi (X \Delta Z_{t_0},
\psi_{t_0^-}(\xi_{t_0^-})) - \psi_{t_0^-}(\xi_{t_0^-})   
\end{array}
\end{equation*}
and 
\begin{equation*}\begin{array}{lll}
\displaystyle \int_0^{t_0} {\psi_s}_* (Y(\xi_s)) \diamond d Z_s &=& \displaystyle \int_0^{t_0^-} {\psi_s}_* (Y(\xi_s)) \diamond d Z_s - \phi (X \Delta Z_{t_0},
\psi_{t_0^-}(\xi_{t_0^-})) \vspace{4mm}\\
 && +\phi \big(X \Delta
Z_{t_0}, \psi_{t_0^-} (\phi(Y \Delta Z_{t_{0}}, (\xi_{t_0^-}))) \big). 
\end{array}
\end{equation*}
Hence,  equation (\ref{Eq: soma e subtrai})  yields 
\begin{equation*}\begin{array}{lll}
\psi_{t_0} (\xi_{t_0})&=& \psi_0 (\xi_0) + \displaystyle \int_0^{t_0} X(\psi_s(\xi_s)) \diamond d Z_s +
\displaystyle \int_0^{t_0} {\psi_s}_* (Y(\xi_s)) \diamond d Z_s.
\end{array}
\end{equation*}

So, the formula also holds for $t = t_0$. By assumption, there exists a. 
s. an $\varepsilon >
0$ such that $Z_t$ is continuous in $(t_0, t_0 + \varepsilon)$. Applying 
again the classical Itô-Ventzel-Kunita formula, we have a
contradiction. Hence $t_0 = a$.

\end{proof}

\end{theorem}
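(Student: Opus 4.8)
The plan is to argue by a stopping-time / continuity-induction argument, splitting the time interval according to whether the driving semimartingale $Z_t$ is continuous or has a jump. Since $Z_t$ has only finitely many jumps in bounded intervals a.s., one can decompose $[0,a]$ into a finite (random) union of intervals on each of which $Z_t$ is continuous, separated by the jump times. Concretely, I would set
\[
t_0 = \sup\{ s \in [0,a] : \text{formula (\ref{eq: integral generalizada}) holds on } [0,s]\},
\]
and aim to show $t_0 = a$ by deriving a contradiction from $t_0 < a$.

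The continuous case is the first step: if $Z_t$ is continuous at $t_0$, then a.s. there is $\varepsilon>0$ with $Z_t$ continuous on $[t_0,t_0+\varepsilon)$. On that interval, both $\psi_t$ and $\xi_t$ solve genuine \emph{Stratonovich} SDEs driven by the continuous semimartingale $Z_t$ (the jump and $[Z,Z]^c$ terms in the Marcus interpretation reduce to the classical Stratonovich correction), so the classical Itô–Ventzel–Kunita formula (Theorem \ref{Thm: Ito-Ventzel cont}) applies, together with the Stratonovich chain rule / Leibniz formula for composition of flows. Matching the classical Stratonovich terms with the continuous part of the generalized integral (\ref{eq: definicao}) — in particular that the $[Z,Z]^c$ coefficient $\tfrac12\big(X'(Y(\xi_s)) + \psi_{s*}(Y'Y)\big)$ is exactly what the continuous Itô–Ventzel–Kunita formula produces — shows the formula extends past $t_0$, contradicting maximality. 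The verification that these coefficients agree is essentially a computation already encoded in the continuous theory, so I would state it and refer to Theorem \ref{Thm: Ito-Ventzel cont}.

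The jump case is the heart of the argument. If $Z_t$ jumps at $t_0 < a$, the formula holds on $[0,t_0^-)$ by definition of $t_0$, and I must check it still holds \emph{at} $t_0$, i.e. that the jump of the left side equals the sum of the jumps of the two integrals on the right. Here one uses the Marcus prescription: the jump of $\xi$ at $t_0$ is obtained by flowing along $Y\Delta Z_{t_0}$ for unit time from $\xi_{t_0^-}$, and then the jump of $\psi$ at $t_0$ is obtained by flowing along $X\Delta Z_{t_0}$ for unit time from $\psi_{t_0^-}$ applied to that point — giving the concatenation identity (\ref{Eq: soma e subtrai}). The point is then purely algebraic: the jump of $\int X(\psi_s(\xi_s))\diamond dZ_s$ contributes $\phi(X\Delta Z_{t_0},\psi_{t_0^-}(\xi_{t_0^-})) - \psi_{t_0^-}(\xi_{t_0^-})$, and the jump of $\int \psi_{s*}(Y(\xi_s))\diamond dZ_s$ — read off from the summation in the definition (\ref{eq: definicao}) — contributes $-\phi(X\Delta Z_{t_0},\psi_{t_0^-}(\xi_{t_0^-})) + \phi(X\Delta Z_{t_0},\psi_{t_0^-}(\phi(Y\Delta Z_{t_0},\xi_{t_0^-})))$. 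Adding these to $\psi_{t_0^-}(\xi_{t_0^-})$ telescopes exactly to $\psi_{t_0}(\xi_{t_0})$. After this, since the next jump of $Z_t$ occurs strictly after $t_0$, there is $\varepsilon>0$ with $Z_t$ continuous on $(t_0,t_0+\varepsilon)$, and the continuous case again pushes the validity past $t_0$, contradicting maximality. Hence $t_0 = a$.

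The main obstacle I anticipate is precisely the bookkeeping in the jump term: one has to set up the generalized integral (\ref{eq: definicao}) so that its jump part is engineered to make (\ref{Eq: soma e subtrai}) telescope, and to be confident that the continuous finite-variation coefficient is the correct one so that the two regimes splice consistently at each jump time. The finiteness-of-jumps hypothesis is what makes the induction over maximal continuity intervals legitimate (it guarantees one does not accumulate jumps at $t_0$), so I would make that dependence explicit.
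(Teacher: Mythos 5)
Your proposal is correct and follows essentially the same strategy as the paper's proof: the supremum-of-validity stopping time $t_0$, the classical It\^o--Ventzel--Kunita formula on continuity intervals, and the telescoping of the jump contributions of the two generalized integrals against the concatenated Marcus jump of $\psi_{t_0}(\xi_{t_0})$. The only difference is that you make explicit the matching of the $[Z,Z]^c$ coefficients and the role of the finite-jumps hypothesis, which the paper leaves implicit.
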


Next corollary contains the Leibniz formula which we are going to use in order  to obtain 
the SDEs for the components of our decomposition in each corresponding Lie 
subgroup. Its proof follows straightforward from Theorem 3.2. 

\begin{corollary}
\label{Leibniz}
 Let $\psi_t$ and $\xi_t$ be flows generated by Marcus equations with
respect
to the same semimartingale $Z_t$, with the property of finite jumps, then:
\begin{equation}\begin{array}{lll} \label{eq: Liebniz generalizada}
\diamond d(\psi \circ \xi)_t = \diamond d \psi_t \circ \xi_t + (\psi_t)_*
\circ \diamond d \xi_t.
\end{array}
\end{equation}
\end{corollary}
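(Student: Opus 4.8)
The plan is to derive the Leibniz formula (\ref{eq: Liebniz generalizada}) directly from the It\^o-Ventzel-Kunita formula of Theorem \ref{thm: Ito-Ventzel} by reading off the dynamics term by term and matching against the definitions of the two $\diamond$-integrals involved. First I would recall that, by construction, $\psi_t$ and $\xi_t$ solve $\diamond d\psi_t = X(\psi_t)\diamond dZ_t$ and $\diamond d\xi_t = Y(\xi_t)\diamond dZ_t$, so the right-hand side of (\ref{eq: Liebniz generalizada}) must be interpreted as the sum of the flow-differential acting on the right, namely $X(\psi_t(\xi_t))\diamond dZ_t$ contributed by $\diamond d\psi_t \circ \xi_t$, and the pushed-forward differential ${\psi_t}_*\big(Y(\xi_t)\big)\diamond dZ_t$ contributed by $(\psi_t)_* \circ \diamond d\xi_t$. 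The content of the corollary is then precisely the assertion that $\psi_t(\xi_t)$, as a process, has these two pieces as its Stratonovich differentials, which is exactly the integral identity (\ref{eq: integral generalizada}).

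The key steps, in order, are: (i) observe that $(\psi\circ\xi)_t$ evaluated at a point coincides with $\psi_t(\xi_t(\cdot))$, so that the composition of flows is governed by the same object appearing in Theorem \ref{thm: Ito-Ventzel}; (ii) apply Theorem \ref{thm: Ito-Ventzel} to get $\psi_t(\xi_t) = \psi_0(\xi_0) + \int_0^t X(\psi_s(\xi_s))\diamond dZ_s + \int_0^t {\psi_s}_*\big(Y(\xi_s)\big)\diamond dZ_s$; (iii) identify the first integral with $\int_0^t \diamond d\psi_s \circ \xi_s$ — this is immediate from the Marcus equation for $\psi$, evaluated along the curve $\xi_s$; (iv) identify the second integral with $\int_0^t (\psi_s)_* \circ \diamond d\xi_s$, by checking that the definition (\ref{eq: definicao}) of $\int_0^t {\psi_s}_* Y(\xi_s)\diamond dZ_s$ is exactly what one obtains by pushing the Marcus differential $\diamond d\xi_s = Y(\xi_s)\diamond dZ_s$ forward through $\psi_s$; (v) differentiate the resulting integral identity to obtain (\ref{eq: Liebniz generalizada}) in differential form.

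The main obstacle, and the only point requiring genuine care, is step (iv): one has to verify that the pushforward operation commutes with the $\diamond$-integration in the right way, i.e. that the extra It\^o-correction terms $\tfrac12\big(X'(Y(\xi_s)) + \psi_{s*}(Y'Y)\big)\,d[Z,Z]_s^c$ and the jump terms $-{\psi_{s^-}}_*(Y(\xi_{s^-}))\Delta Z_s - \phi(X\Delta Z_s,\psi_{s^-}(\xi_{s^-})) + \phi\big(X\Delta Z_s,\psi_{s^-}(\phi(Y\Delta Z_s,\xi_{s-}))\big)$ built into (\ref{eq: definicao}) are precisely the ones needed so that $\psi_s$ composed with the fictitious jump curve of $\xi$ and then with the fictitious jump curve of $\psi$ concatenate correctly. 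Since (\ref{eq: definicao}) was set up by definition so that this concatenation holds (this is exactly what equation (\ref{Eq: soma e subtrai}) in the proof of Theorem \ref{thm: Ito-Ventzel} encodes), the identification is a matter of unwinding definitions rather than proving anything new; hence the claim in the text that the proof ``follows straightforward from Theorem 3.2'' is justified, and the corollary is essentially a restatement of (\ref{eq: integral generalizada}) in Leibniz form.
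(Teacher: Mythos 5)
Your proposal is correct and follows exactly the route the paper intends: the corollary is obtained by applying Theorem \ref{thm: Ito-Ventzel} and identifying the two $\diamond$-integrals in (\ref{eq: integral generalizada}) with the two terms of the Leibniz formula via the definitions of the Marcus integral and of (\ref{eq: definicao}). Your step (iv), spelling out why the correction and jump terms in (\ref{eq: definicao}) are precisely what the pushforward identification requires, is a useful elaboration of what the paper leaves as ``straightforward.''
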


\section{Decomposition of stochastic flows with jumps}

In this section, following the framework of \cite{KPP}, we assume
that the manifold $M$ is embedded in an Euclidean space. Hence, smooth vector
fields in $M$ can be extended to smooth vector fields in a tubular
neighbourhood of $M$. Let $\varphi_t$ be
a stochastic
flow of (local) diffeomorphisms in $M$ generated by a Stratonovich SDE in the
sense of Marcus equation, where
the integrator has the property of finite jumps, as described in the
previous section.

Assume that the manifold $M$ has (locally) a pair of differentiable
distributions, that we call horizontal and vertical. We denote the first one by
$\Delta^H: U \subseteq M \to Gr_k(M)$ and the second by $\Delta^V: U \subseteq
M \to Gr_{n-k}(M)$. Also, we assume that the horizontal and vertical
distributions are complementary, that is, for each $x \in U$, $\Delta^H(x)
\oplus \Delta^V(x) = T_x M$.

%

\subsection{Existence of the decomposition}

We are interested in decomposing the stochastic flow as $\varphi_t = \xi_t 
\circ \psi_t$, where the components $\xi_t \in \text{Diff}(\Delta^H, M)$ and 
$\psi_t\in \text{Diff}(\Delta^V, M)$. Initially, consider the following 
definitions:

\begin{definition}[Adjoint distribution]
Let $\Delta: U \subseteq M \to Gr_{p}(M)$ be a differential distribution in
$M$, and take $\xi \in \text{Diff}(M)$. The distribution
$Ad(\xi)\Delta: U \subseteq M \to Gr_{p}(M)$ is given by:
$$Ad(\xi)\Delta (x) = \xi_* \Delta (\xi^{-1}(x)).$$
\end{definition}

\begin{definition}
\label{Def: transversality}
Let $\Delta^H$ and $\Delta^V$ be a pair of locally complementary distributions
in $M$. We say that $\Delta^H$ and $\Delta^V$ preserve transversality along
$\text{Diff}(\Delta^H, M)$ if, for each $\xi \in \text{Diff}(\Delta^H, M)$, the
distributions $\Delta^H$ and $Ad(\xi) \Delta^V$ are locally complementary.
\end{definition}

Assuming the condition of transversality in the distributions, we have a
result on the decomposition of a continuous stochastic flow
${\varphi}_t$ generated by a canonical Stratonovich SDE \cite[Thm 
2.2]{CSR2}:

\begin{theorem}
\label{Thm: continuous}
Let $\Delta^H$ and $\Delta^V$ be two complementary distributions
in $M$ which preserve transversality along $\text{Diff}(\Delta^H, M)$. Given a
continuous stochastic flow ${\varphi}_t$ then, up to a stopping time, there
exists a factorization ${\varphi}_t = {\xi}_t \, \circ\,
{\psi}_t$, where ${\xi}_t$ is a continuous diffusion in
$\text{Diff}(\Delta^H, M)$ and ${\psi}_t$ is a continuous process in
$\text{Diff}(\Delta^V, M)$.
\end{theorem}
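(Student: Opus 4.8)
The plan is to build the horizontal factor $\xi_t$ first, as the solution of a closed Stratonovich SDE living in $\difH$, and then to \emph{define} $\psi_t := \xi_t^{-1}\circ \varphi_t$ and verify that it is generated by vertical vector fields. Write the generating equation of the given flow as $\circ\, d\varphi_t = \sum_i X^i(\varphi_t)\circ dZ^i_t$. Postulating $\varphi_t = \xi_t\circ\psi_t$ and applying the continuous Leibniz formula (the corollary of Theorem~\ref{Thm: Ito-Ventzel cont}), $\circ\, d(\xi\circ\psi)_t = \circ\, d\xi_t\circ\psi_t + (\xi_t)_*\circ d\psi_t$, one is led, after evaluating both increments at $\varphi_t = \xi_t(\psi_t)$, to the pointwise identity
\[
X^i(\varphi_t) = H^i_t(\varphi_t) + \big((\xi_t)_* V^i_t\big)(\varphi_t),
\]
where $H^i_t$ is the (time-dependent, random) generator of $\xi_t$ and $V^i_t$ that of $\psi_t$. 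I require $H^i_t$ to be horizontal and $V^i_t$ vertical; then the second term takes values in $Ad(\xi_t)\Delta^V$ at the point $\varphi_t$.

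The first step is to introduce, for each $\xi\in\difH$, the projection $P^H_\xi$ onto $\Delta^H$ along $Ad(\xi)\Delta^V$. This is exactly where the hypothesis enters: since $\Delta^H$ and $\Delta^V$ preserve transversality along $\difH$, the splitting $T_yM = \Delta^H(y)\oplus Ad(\xi)\Delta^V(y)$ holds for every $\xi\in\difH$ and every $y$ in the local chart $U$, so $P^H_\xi$ is well defined and depends smoothly on $\xi$. Setting $H^i_t := P^H_{\xi_t} X^i$, the generator of $\xi_t$ depends only on $\xi_t$, and the equation
\[
\circ\, d\xi_t = \sum_i \big(P^H_{\xi_t} X^i\big)(\xi_t)\circ dZ^i_t
\]
becomes a closed Stratonovich SDE in the Lie group $\difH$, whose generating fields are horizontal by construction.

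Next I would solve this equation. Because the coefficients are smooth maps into the space of horizontal vector fields, existence and uniqueness of $\xi_t$ in the ILB-Lie group $\difH$ follow from Omori's theory and the support theorem for finite-dimensional noise cited in the Introduction; crucially, since every generator is horizontal, the solution $\xi_t$ \emph{remains} in $\difH$, which is precisely the set on which $P^H_{\xi_t}$ was assumed well defined, and this self-consistency closes the argument. I would then let $\tau$ be the first time that $\varphi_t$ or $\xi_t^{-1}\circ\varphi_t$ leaves $U$ or that the transversal splitting degenerates, so that all projections stay valid on $[0,\tau)$. Finally, defining $\psi_t := \xi_t^{-1}\circ\varphi_t$ and using Leibniz together with the inverse rule $\circ\, d(\xi_t^{-1}) = -(\xi_t^{-1})_*\big(\circ\, d\xi_t\circ\xi_t^{-1}\big)$, a direct computation yields $\circ\, d\psi_t = \sum_i V^i_t(\psi_t)\circ dZ^i_t$ with $V^i_t = (\xi_t^{-1})_*\big(X^i - P^H_{\xi_t}X^i\big)$. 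Since $X^i - P^H_{\xi_t}X^i$ lies in $Ad(\xi_t)\Delta^V$, its pullback $V^i_t$ is vertical, so $\psi_t\in\difV$ while $\xi_t\circ\psi_t = \varphi_t$ by definition.

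The main obstacle I expect is the infinite-dimensional SDE for $\xi_t$: one must check that $\xi\mapsto P^H_\xi X^i$ is a genuinely smooth (ILB) vector field on $\difH$ and that the solution does not leave $\difH$, so that the projection stays defined along the whole trajectory. Controlling the degeneration of the transversal splitting is exactly what forces the factorization to hold only up to a stopping time, and establishing that this $\tau$ is a.s.\ strictly positive (so the statement is non-vacuous) is the delicate point; it rests on the smooth dependence of $P^H_\xi$ on $\xi$ near the identity, where $Ad(\xi)\Delta^V$ stays close to $\Delta^V$ and transversality with $\Delta^H$ is automatic.
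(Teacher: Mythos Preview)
Your proposal is correct and follows essentially the same route as the paper. In the paper this theorem is quoted from \cite{CSR2} without a self-contained proof, but the argument you outline is exactly the one the paper reproduces in Section~4.2 for the jump case: decompose each $X^i$ along $\Delta^H \oplus Ad(\xi)\Delta^V$ (your $P^H_\xi X^i$ is the paper's $\widetilde{X_i}$, and $X^i - P^H_\xi X^i$ is their $v_i(\xi,\cdot)$), solve the closed Stratonovich equation for $\xi_t$ in $\difH$, set $\psi_t=\xi_t^{-1}\circ\varphi_t$, and use the Leibniz/inverse formulas to identify its generator as $Ad(\xi_t^{-1})v_i(\xi_t)\in\Delta^V$, invoking the support theorem to conclude $\psi_t\in\difV$.
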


Initially, a straightforward extension of this result holds for  a stochastic 
flow $\varphi_t$ generated by a Stratonovich SDE with respect to a 
semimartingale $Z_t$ with jumps (proposition below). In 
the next section we present a stronger result showing the equations of each 
component in $\difH$ and $\difV$.

\begin{proposition} \label{prop: existence}
Suppose that the distributions $\Delta^V$ and $\Delta^H$ preserves
transversality along $\text{Diff}(\Delta^H, M)$. Given a stochastic 
flow $\varphi_t$ generated by a Stratonovich SDE with jumps, then, up to a 
stopping time $\tau$,
there exists a factorization 
$\varphi_t = \xi_t \circ \psi_t$,
where $\xi_t$  is a diffusion in
$\text{Diff}(\Delta^H, M)$ and ${\psi}_t$ is a  process in
$\text{Diff}(\Delta^V, M)$.

Moreover, if  $t_0$ is a point of discontinuity of $Z_t$, $t_0 \leq \tau$ 
and the decomposition holds for $\varphi_{t_0}$, then $t_0 < \tau$, a.s..

\begin{proof} By assumption, the first discontinuity of the integrator $Z_t$ 
happens at a stopping time $t_1>0$ a.s.. Hence, Theorem \ref{Thm: continuous} 
guarantees the existence of the decomposition at least up to $t_1$.
For the last statement,
take $\delta > 0$ such that the integrator $Z_t$ is continuous in
$(t_0, t_0 + \delta)$
and apply again Theorem \ref{Thm: continuous}.
\end{proof}
\end{proposition}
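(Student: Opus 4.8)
The plan is to reduce everything to the already-established continuous case (Theorem \ref{Thm: continuous}), using the fact that between jumps the integrator $Z_t$ is a continuous semimartingale, together with a concatenation argument at the jump times that exploits the Marcus interpretation of the jumps. First I would observe that, under the finite-jump hypothesis on $Z_t$, the jump times form a strictly increasing sequence of stopping times $0 < t_1 < t_2 < \cdots$ with no accumulation in bounded intervals a.s. On the (random) interval $[0, t_1)$ the driving process is continuous, so Theorem \ref{Thm: continuous} directly produces a factorization $\varphi_t = \xi_t \circ \psi_t$ with $\xi_t \in \dif(\Delta^H, M)$ and $\psi_t \in \dif(\Delta^V, M)$, valid up to some stopping time $\tau_1 \le t_1$ (the continuous-case stopping time arising from possible loss of transversality or blow-up of the components in the ILB-group).

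Next I would handle a single jump time $t_0$, under the hypothesis that the decomposition already holds for $\varphi_{t_0^-}$ (equivalently $\varphi_{t_0}$, as in the statement). By the Marcus prescription, $\varphi_{t_0} = \phi(X\Delta Z_{t_0}, \cdot) \circ \varphi_{t_0^-}$, where $X$ is the generating vector field of $\varphi_t$ and $\phi(X \Delta Z_{t_0}, \cdot)$ is the time-one flow of the deterministic vector field $X \Delta Z_{t_0}$. The point is that $\phi(X\Delta Z_{t_0}, \cdot) \in \dif(TM, M)$, and the continuous-case decomposition argument of \cite{CSR2} applies equally to the deterministic diffeomorphism $\phi(X\Delta Z_{t_0}, \cdot)$: one factors it as $\xi^J \circ \psi^J$ with $\xi^J \in \dif(\Delta^H, M)$, $\psi^J \in \dif(\Delta^V, M)$, provided transversality is preserved. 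Composing, $\varphi_{t_0} = (\xi^J \circ \xi_{t_0^-}) \circ (\mathrm{Ad}(\xi_{t_0^-}^{-1})\psi^J \cdots \psi_{t_0^-})$; more precisely one uses the group structure of $\dif(\Delta^H, M)$ and $\dif(\Delta^V, M)$ together with the adjoint-transversality condition to absorb the jump factors into the respective components, so that the decomposition of $\varphi_{t_0}$ exists. This gives the "moreover" assertion: after the jump we are again on an interval where $Z_t$ is continuous (choose $\delta>0$ with $Z$ continuous on $(t_0, t_0+\delta)$), and Theorem \ref{Thm: continuous} extends the factorization strictly past $t_0$, so $t_0 < \tau$.

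The remaining step is to glue these pieces: define $\tau$ as the supremum of times up to which a factorization into the two subgroups exists, and argue $\tau > 0$ a.s. and that $\tau$ is a stopping time. On $[0, t_1)$ existence holds by the first paragraph; at $t_1$, if $t_1 \le \tau$, the jump argument extends past $t_1$; iterating over $t_2, t_3, \dots$ (only finitely many in any bounded interval) shows that the only way the factorization can fail before a fixed time $T$ is through the continuous-case obstruction on one of the inter-jump intervals, which is exactly what is recorded in the stopping time $\tau$. The main obstacle I anticipate is the jump step: one must check that the deterministic diffeomorphism $\phi(X\Delta Z_{t_0}, \cdot)$ can indeed be decomposed and, crucially, that composing its two factors with the pre-jump components $\xi_{t_0^-}, \psi_{t_0^-}$ stays inside $\dif(\Delta^H, M) \times \dif(\Delta^V, M)$ — this is precisely where the hypothesis that $\Delta^H, \Delta^V$ preserve transversality along $\dif(\Delta^H, M)$ is used, via the adjoint distribution $\mathrm{Ad}(\xi)\Delta^V$ remaining complementary to $\Delta^H$. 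Everything else is bookkeeping reduction to \cite[Thm 2.2]{CSR2}.
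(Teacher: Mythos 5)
Your first and last paragraphs reproduce the paper's argument: between jumps the integrator is a continuous semimartingale, so Theorem \ref{Thm: continuous} gives the factorization up to the first jump time $t_1$, and the ``moreover'' clause follows by restarting that theorem on an interval $(t_0,t_0+\delta)$ on which $Z$ is continuous. The genuine problem is your middle step, which tries to prove something strictly stronger than the statement and which is false. You assert that decomposability of $\varphi_{t_0^-}$ is ``equivalent'' to decomposability of $\varphi_{t_0}$, and that the Marcus jump map $\phi(X\Delta Z_{t_0},\cdot)$ can always be factored as $\xi^J\circ\psi^J$ by the continuous-case argument. But Theorem \ref{Thm: continuous} (i.e.\ \cite[Thm 2.2]{CSR2}) only decomposes a flow \emph{up to a stopping time}, that is, for diffeomorphisms sufficiently close to the identity; a fixed time-one map $\phi(X\Delta Z_{t_0},\cdot)$ need not admit any factorization into $\difH\circ\difV$ when the jump $\Delta Z_{t_0}$ is large. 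The paper makes exactly this point immediately after the proposition with the rotation example in $\R^2$: if $Z$ jumps to $\pi/2+k\pi$, the post-jump diffeomorphism is not decomposable even though the pre-jump one is. This is why the statement takes ``the decomposition holds for $\varphi_{t_0}$'' as a \emph{hypothesis} rather than deriving it from the decomposition at $t_0^-$; your reading of the two as interchangeable removes the one hypothesis that cannot be removed.

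Even where a factorization $\phi(X\Delta Z_{t_0},\cdot)=\xi^J\circ\psi^J$ does exist, your absorption step does not close. From $\varphi_{t_0}=\xi^J\circ\psi^J\circ\xi_{t_0^-}\circ\psi_{t_0^-}$ you must commute $\psi^J$ past $\xi_{t_0^-}$, and the factor you produce, $\xi_{t_0^-}^{-1}\circ\psi^J\circ\xi_{t_0^-}$, lies in $\dif(Ad(\xi_{t_0^-}^{-1})\Delta^V, M)$, not in $\difV$: the transversality-preservation hypothesis keeps $Ad(\xi)\Delta^V$ complementary to $\Delta^H$ for $\xi\in\difH$, but it does not make $\difV$ a normal subgroup, so the conjugate cannot simply be declared a vertical component. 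Repairing this would require a further decomposition argument of its own. None of this machinery is needed for the proposition as stated: once decomposability of $\varphi_{t_0}$ is assumed, one only has to apply Theorem \ref{Thm: continuous} to the continuous flow restarted at $t_0$, which is the paper's (admittedly terse) proof. I recommend deleting the jump-factorization step entirely and keeping only the reduction to the continuous theorem on the inter-jump intervals.
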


In the last statement of the Proposition above, the
hypothesis on the decomposition of the diffeomorphism $\varphi_{t_0}$ cannot
be removed: In fact, it may happen that the decomposition
holds up to the time $t_0^-$, and the process jumps to a non decomposable
diffeomorphism $\varphi_{t_0}$. As a basic example to illustrate this fact,
consider the canonical horizontal and vertical foliations in $R^2$. Take the
pure rotation system  $dx_t = A x_t \diamond dZ_t$, where $Z_t$ is a
semimartingale such that $Z_t \notin \{\pi/2 + k\pi, k \in \Z
\} $ for $t<t_0$. The corresponding flow and its
decomposition for $t< t_0$ is given by

$$\varphi_t = \left(
                                \begin{array}{cc}
                                  \cos Z_t & -\sin Z_t \\
                                  \sin Z_t & \cos Z_t
                                \end{array}
                              \right)=  \left(
                                \begin{array}{cc}
                                  \sec Z_t & -\tan Z_t \\
                                  0 & 1
                                \end{array}
                              \right)
                              \left(
                                \begin{array}{cc}
                                  1 & 0 \\
                                  \sin Z_t & \cos Z_t
                                \end{array}
                              \right).$$

If at time $t_0$ the process jumps to $Z_{t_0} \in \{\pi/2 + k\pi, k \in \Z \}$,
then $\varphi_{t_0}$ is not decomposable. On the other hand, if the stochastic
flow jumps to a decomposable diffeomorphism $\varphi_{t_0}$, then the
decomposition holds at least an additional  $\delta>0$ longer than $t_0$.
In Morgado and Ruffino \cite{MR}, we have used this fact to construct a
stochastic flow $\widetilde{\varphi_t}$ generated by a Stratonovich SDE with
controlled jumps that approximates arbitrarily a stochastic flow
${\varphi}_t$ generated by a continuous  Stratonovich SDE, with the
property that $\widetilde{\varphi_t}$ can be decomposable for all $t>0$.

\subsection{Stratonovich SDEs with jumps for the components}

Using the extension of Itô-Ventzel-Kunita formula for Stratonovich SDE with jumps,
one can obtain the correspondent SDEs (in the sense of Marcus equation) for the components $\xi_t$ and $\psi_t$ in
the decomposition above. Initially, we have that $\varphi_t$ satisfies the
following right invariant equation in the Lie group terminology:
$$
d\varphi_t = \displaystyle \sum_{i=1}^m R_{\varphi_t *} X^i \diamond dZ^i_t,
$$
where $R_{\varphi_t *}$ is the derivative of the right translation of $\varphi_t$ in the
identity of $\text{Diff(M)}$. In fact, for all $i \in \{ 1, \ldots, m\}$, it 
holds that $R_{\varphi_t *} X^i = X^i (\varphi_t)$, since if $\gamma$ is a curve 
in $\text{Diff(M)}$ such that $\gamma(0) = Id$ and $\gamma'(0) = X^i$, we have:
$$R_{\varphi_t*} (X^i) = \frac{d}{ds} \big(\gamma(s) \circ \varphi_t \big)
\big{|}_{s=0} = X^i(\varphi_t).$$


By assumption in Proposition \ref{prop: existence}, we consider that $\Delta^H$ 
and $\Delta^V$ preserves transversality along $\text{Diff}(\Delta^H, M)$. So, 
for each $\xi \in \text{Diff}(\Delta^H,M)$, the distributions $\Delta^H$ and 
$Ad(\xi)\Delta^V$ are complementary. For each $x \in U$, we have a 
unique decomposition of $X^i(x)$ in the corresponding subspaces, given by:
$$
X_i(x) = \widetilde{X_i}(x) + v_i (\xi,x),
$$
where $\widetilde{X_i}(x) \in \Delta^H(x)$ and $v_i (\xi,x) \in
Ad(\xi)\Delta^V (x)$. Now, consider the autonomous Stratonovich SDE in the subgroup
$\text{Diff}(\Delta^H,M)$, given by:
$$
dx_t = \displaystyle \sum_{i=1}^m \widetilde{X_i}(x_t) \diamond dZ_t^i,
$$
with initial condition $x_0 = Id$. We define $\xi_t$ as the solution of this
equation. So, in the Lie group terminology, the
stochastic flow $\xi_t$ satisfies:
\begin{equation}\begin{array}{lll}
\label{eq: xi}
d\xi_t = \displaystyle \sum_{i=1}^m R_{\xi_t *} \widetilde{X_i} \diamond dZ_t^i.
\end{array}
\end{equation}
Note that this equation is not right invariant in the Lie group since 
$\widetilde{X_i}$ depends on $\xi_t$. By support theorem, it follows that 
$\xi_t \in \text{Diff}(\Delta^H,M)$. Finally, define $\psi_t = \xi_t^{-1} \circ 
\varphi_t.$ 
Using the Corollary \ref{Leibniz}, we can write the following equation for 
$d\xi_t^{-1}$:
\begin{equation}\begin{array}{lll}
d\xi_t^{-1} = \displaystyle \sum_{i=1}^m -L_{\xi_t^{-1} *} \widetilde{X_i}
\diamond dZ_t^i,
\end{array}
\end{equation}
where $L_{\xi_t^{-1} *}$ is the derivative of the left translation of
$\xi_t^{-1}$ at the
identity of the Lie group $\mathrm{ Diff(\Delta^H, M)}$. In fact, just take the 
time derivative of $\xi_t \circ \xi_t^{-1} = Id$. Therefore, using again the 
Corollary \ref{Leibniz}, one can obtain an expression for $d \psi_t$, given by:
\begin{equation}\begin{array}{lll}
\label{eq: psi}
d\psi_t &=& \displaystyle \sum_{i=1}^m \Big( \xi_t^{-1} X_i \ \xi_t \ \psi_t -
\xi_t^{-1} \widetilde{X_i} \ \xi_t \ \psi_t \Big) \diamond dZ_t^i = \\
&=& \displaystyle \sum_{i=1}^m Ad(\xi_t^{-1}) \ \big(X_i - \widetilde{X_i}\big)
\ \psi_t \diamond dZ_t^i = \\
&=& \displaystyle \sum_{i=1}^m Ad(\xi_t^{-1}) \ v_i (\xi_t) \ \psi_t \diamond
dZ_t^i.
\end{array}
\end{equation}

Note that, by construction, we have that $v_i(\xi_t, x) \in Ad(\xi_t) 
\Delta^V(x)$. Hence, 
$Ad(\xi_t^{-1})v_i(\xi_t,x) \in \Delta^V(x)$ for all $x \in U$, and using
support theorem again, one conclude that $\psi_t \in \text{Diff}(\Delta^V,M)$.
So, with the same notation as before, we have proved the following
result:

\begin{theorem} \label{thm: decomposicao}
The components $\xi_t$ and $\psi_t$ in the decomposition above can be
described, respectively, as the solutions of the following Stratonovich SDEs
with jumps:
$$
d \xi_t = \displaystyle \sum_{i=1}^m \widetilde{X_i}(\xi_t) \diamond dZ_t^i \ \ 
\
\ \mbox{ and }\ \  d \psi_t = \displaystyle \sum_{i=1}^m Ad(\xi_t^{-1}) \ v_i 
(\xi_t) (\psi_t)
\diamond dZ_t^i.
$$
with initial conditions $ \xi_0= \psi_0=Id$.
\end{theorem}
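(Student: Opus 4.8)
The plan is to build the two components of the factorization by hand and then read off their Stratonovich equations from the Leibniz formula of Corollary \ref{Leibniz}. \emph{Construction of $\xi_t$.} By the transversality hypothesis of Proposition \ref{prop: existence}, for every $\xi\in\difH$ and every $x\in U$ one has $\Delta^H(x)\oplus Ad(\xi)\Delta^V(x)=T_xM$; hence the projection $\widetilde{X_i}(x)\in\Delta^H(x)$ of $X^i(x)$ parallel to $Ad(\xi)\Delta^V(x)$ is well defined and depends smoothly on $(\xi,x)$, and we set $v_i(\xi,x)=X^i(x)-\widetilde{X_i}(x)\in Ad(\xi)\Delta^V(x)$. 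I define $\xi_t$ as the solution of the Marcus SDE $d\xi_t=\sum_{i=1}^m R_{\xi_t*}\widetilde{X_i}\diamond dZ^i_t$ with $\xi_0=Id$; existence and uniqueness of this flow of local diffeomorphisms follow from the ILB theory of \cite{Omori} together with the support theorem for finite-dimensional noise \cite{Twardowska}. Although this equation is not right invariant (its coefficients depend on the current diffeomorphism through the splitting above), at each instant it is driven by genuine vector fields lying in $\Delta^H$, so the support theorem forces $\xi_t\in\difH$; this keeps the transversality hypothesis valid along the whole trajectory, so the construction is self-consistent.

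\emph{Equation for $\xi_t^{-1}$ and definition of $\psi_t$.} Since the inverse flow of a Marcus equation is again a Marcus flow with respect to the same $Z_t$ (see \cite{KPP}), Corollary \ref{Leibniz} applies to the constant process $\xi_t\circ\xi_t^{-1}\equiv Id$, giving
\[
0=\diamond d\xi_t\circ\xi_t^{-1}+(\xi_t)_*\,\diamond d\xi_t^{-1}.
\]
Solving for $\diamond d\xi_t^{-1}$, i.e. applying the inverse left translation $(\xi_t^{-1})_*$, yields $d\xi_t^{-1}=-\sum_{i=1}^m L_{\xi_t^{-1}*}\widetilde{X_i}\diamond dZ^i_t$. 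Now put $\psi_t:=\xi_t^{-1}\circ\varphi_t$, so that $\varphi_t=\xi_t\circ\psi_t$ identically; on the interval $[0,\tau)$ furnished by Proposition \ref{prop: existence} this is the desired factorization.

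\emph{Equation for $\psi_t$ and membership in $\difV$.} Applying Corollary \ref{Leibniz} once more, with $\diamond d\varphi_t=\sum_i X^i(\varphi_t)\diamond dZ^i_t$ and the previous step,
\[
\diamond d\psi_t=\diamond d\xi_t^{-1}\circ\varphi_t+(\xi_t^{-1})_*\,\diamond d\varphi_t=\sum_{i=1}^m(\xi_t^{-1})_*\bigl(X^i-\widetilde{X_i}\bigr)(\varphi_t)\,\diamond dZ^i_t .
\]
Since $X^i-\widetilde{X_i}=v_i(\xi_t,\cdot)$ and $\varphi_t=\xi_t\circ\psi_t$, the pointwise identity $(\xi_t^{-1})_*\,W(\xi_t\circ\psi_t)=\bigl(Ad(\xi_t^{-1})W\bigr)(\psi_t)$ converts this into $\diamond d\psi_t=\sum_{i=1}^m Ad(\xi_t^{-1})\,v_i(\xi_t)(\psi_t)\diamond dZ^i_t$, which is the second equation of the statement. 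Finally, because $v_i(\xi_t,x)\in Ad(\xi_t)\Delta^V(x)$ by construction, $Ad(\xi_t^{-1})v_i(\xi_t)$ is a section of $\Delta^V$; the $\psi$-equation is thus driven at every instant by vector fields in $\Delta^V$, so a last appeal to the support theorem gives $\psi_t\in\difV$, completing the identification of both components (uniqueness of the factorization, when the distributions are involutive, having been noted in the Introduction).

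\textbf{Main obstacle.} The delicate point is the rigorous use of the Leibniz / It\^o--Ventzel--Kunita machinery of Theorem \ref{thm: Ito-Ventzel} and Corollary \ref{Leibniz} in the infinite-dimensional ILB setting: one must check that the non-right-invariant flow $\xi_t$, its inverse $\xi_t^{-1}$ and the composition $\xi_t^{-1}\circ\varphi_t$ all fall within the hypotheses under which that formula was established — in particular that the Marcus jump terms (jumps along the deterministic flow of $X\Delta Z_s$) compose correctly through $\xi_t\circ\xi_t^{-1}$ and through $\xi_t^{-1}\circ\varphi_t$ — and that the relations between $L_{g*}$, $R_{g*}$, $Ad(g)$ and the differentials $Dg$ used above are exactly those built into the proof of Corollary \ref{Leibniz}.
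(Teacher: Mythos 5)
Your proposal is correct and follows essentially the same route as the paper: construct $\xi_t$ from the $\Delta^H$-component $\widetilde{X_i}$ of the splitting $X_i=\widetilde{X_i}+v_i(\xi,\cdot)$ induced by $\Delta^H\oplus Ad(\xi)\Delta^V$, set $\psi_t=\xi_t^{-1}\circ\varphi_t$, and apply Corollary \ref{Leibniz} twice (first to $\xi_t\circ\xi_t^{-1}=Id$, then to the composition) before invoking the support theorem for membership in each subgroup. The only difference is that you flag the ILB-setting technicalities more explicitly than the paper does; the substance of the argument is the same.
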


\subsection{Compatibility of the vector fields and the
pair of distributions}

In some
geometrical structures where we have a sort of compatibility of the vector
fields with
the pair of complementary distributions, it is possible to guarantee that the
decomposition holds for all positive time $t\geq 0$.

Given an initial condition $x_0\in M$, consider a local coordinate system
 $\alpha_0: U_0\subset M \rightarrow  \R^{p} \times \R^{n-p}$, with
$U_0$ an open set of $M$ containing $x_0$ and such that $\alpha_*
(\Delta^V(x_0))= \{0\} \times \R^{n-p}  $. Analogously, at each time $t\geq 0$,
consider $\alpha_t: U_t\subset M \rightarrow  \R^{p} \times \R^{n-p}$,
with $U_t$ an
open set of $M$ containing $\varphi_t(x_0)$ such that $\alpha_{t*}
(\Delta^V(\varphi_t(x_0))= \{0\} \times \R^{n-p}$.
With respect to these coordinate systems one writes the original flow
as $\varphi_t = (\varphi_t^1 (x,y), \varphi_t^2 (x,y))$.

\begin{proposition} Suppose that the decomposition  $\varphi_t= \xi_t \circ
\psi_t$ holds up to the stopping  time $\tau$ in a neighbourhood of the
initial condition $x_0$. Writing in coordinates,

\[
 \tau = \sup \{t >0;  \det
\frac{\partial \varphi_s^2(x,y)}{\partial y}\neq 0 \mbox{ for all } 0\leq s\leq t\}.
\]
\end{proposition}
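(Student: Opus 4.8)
The plan is to identify $\tau$ as the first time at which the construction of the two components carried out in Section 4.2 breaks down, to recognise that this obstruction is exactly a loss of transversality between $\Delta^H$ and the image $(\varphi_t)_*\Delta^V$, and then to express that transversality in the given coordinates. Recall from Section 4.2 that $\xi_t$ is the solution of $d\xi_t=\sum_i R_{\xi_t*}\widetilde{X_i}\diamond dZ^i_t$, where $\widetilde{X_i}(x)$ is the $\Delta^H(x)$-component of $X^i(x)$ with respect to the splitting $\Delta^H(x)\oplus Ad(\xi_t)\Delta^V(x)=T_xM$, and that $\psi_t=\xi_t^{-1}\circ\varphi_t$ then lies in $\difV$. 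Consequently the factorisation is well defined on a neighbourhood of $x_0$ for $s\in[0,t]$ precisely while this splitting holds along $s\mapsto\varphi_s(x_0)$; since transversality of two subspaces is an open condition and each $\varphi_s$ is a local diffeomorphism, it is enough to require it at the single point $\varphi_s(x_0)$, for $s$ ranging over the \emph{closed} interval $[0,t]$.

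Next I would trade $Ad(\xi_s)\Delta^V$ for $(\varphi_s)_*\Delta^V$. Since $\psi_s\in\difV$ and $\Delta^V$ is involutive, $\psi_s$ sends each leaf of the vertical foliation to itself (as noted in the Introduction), hence $Ad(\psi_s)\Delta^V=\Delta^V$; together with $\varphi_s=\xi_s\circ\psi_s$ and the definition of the adjoint distribution this gives, at $z=\varphi_s(x_0)$,
\[
Ad(\xi_s)\Delta^V(z)=(\xi_s)_*(\psi_s)_*\Delta^V(x_0)=(\varphi_s)_*\Delta^V(x_0).
\]
So the decomposition holds on a neighbourhood of $x_0$ for all $s\in[0,t]$ if and only if $\Delta^H(\varphi_s(x_0))$ and $(\varphi_s)_*\Delta^V(x_0)$ are complementary for every such $s$.

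For the coordinate description I would take the charts $\alpha_0$ at $x_0$ and $\alpha_s$ at $\varphi_s(x_0)$ adapted to both distributions at the base point, i.e. $\alpha_{s*}\Delta^V(\varphi_s(x_0))=\{0\}\times\R^{n-p}$ and $\alpha_{s*}\Delta^H(\varphi_s(x_0))=\R^{p}\times\{0\}$ (possible because the two distributions are complementary there), and similarly for $\alpha_0$. Writing $\varphi_s=(\varphi_s^1,\varphi_s^2)$ and using $\Delta^V(x_0)=\{0\}\times\R^{n-p}$, the subspace $(\varphi_s)_*\Delta^V(x_0)$ is the column space of the $n\times(n-p)$ matrix with blocks $\frac{\partial\varphi_s^1}{\partial y}$ and $\frac{\partial\varphi_s^2}{\partial y}$; this $(n-p)$-plane is complementary to $\R^{p}\times\{0\}$ exactly when its projection onto the second factor, the square block $\frac{\partial\varphi_s^2(x,y)}{\partial y}$, is invertible. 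Hence the factorisation survives on $[0,t]$ iff $\det\frac{\partial\varphi_s^2(x,y)}{\partial y}\neq 0$ for all $0\le s\le t$, and the supremum of such $t$ is $\tau$. The closedness of the interval is essential: at a jump time of $Z_t$ the diffeomorphism $\varphi_{t_0}$ may fail to be decomposable although $\varphi_{t_0^-}$ was, which is the content of the last assertion of Proposition \ref{prop: existence} and is illustrated by the rotation example following it.

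The hard part will be the rigorous version of the first step, namely the equivalence between ``$\varphi_t=\xi_t\circ\psi_t$ holds on a whole neighbourhood of $x_0$ throughout $[0,t]$'' and ``the pointwise transversality at $\varphi_s(x_0)$ holds throughout $[0,t]$''. One implication is just the construction recalled above; for the other one must check that the neighbourhood on which that construction is valid does not collapse before the first transversality failure, which I expect to follow from a continuity/compactness argument on the compact intervals $[0,t]$, $t<\tau$, combined with the jump-by-jump concatenation already used in the proofs of Theorem \ref{thm: Ito-Ventzel} and Proposition \ref{prop: existence}. The remaining ingredients --- the leaf-preserving property of elements of $\difV$ and the linear algebra of complementary subspaces --- are routine.
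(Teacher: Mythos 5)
Your proposal has the right geometric picture --- the determinant condition is exactly transversality of $\Delta^H$ with the pushed-forward vertical distribution --- but you leave the decisive step unproved (you say so yourself: ``the hard part will be the rigorous version of the first step''), and the detour you take to get there is both heavier and less general than what is actually needed. The paper's proof is a purely static, per-time-$s$ statement that never touches the SDE construction of $\xi_t$: in the adapted chart one simply \emph{exhibits} the factorization
\[
\varphi_s=(\xi^1(x,y),\,Id_2)\circ (Id_1,\,\varphi_s^2(x,y)),
\]
where the second factor fixes the first coordinate (hence is vertical) and, by the inverse function theorem, is a local diffeomorphism precisely when the block $\frac{\partial\varphi_s^2}{\partial y}$ is invertible; then $\xi_s:=\varphi_s\circ(Id_1,\varphi_s^2)^{-1}$ automatically has the form $(\xi^1(x,z),z)$. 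This single formula settles both implications of the equivalence you defer: if the block is invertible the factorization exists, and if a factorization $\varphi_s=\xi_s\circ\psi_s$ with $\psi_s$ leaf-preserving exists, then $\varphi_s^2$ depends on $y$ through the local diffeomorphism $\psi_s$ restricted to a vertical leaf, forcing the block to be invertible. Your continuity/compactness and jump-concatenation machinery is not needed.

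Two further points. First, your identity $Ad(\psi_s)\Delta^V=\Delta^V$ uses involutivity of $\Delta^V$, which is not a hypothesis of the proposition (the paper only assumes the chart rectifies $\Delta^V$ at the points $\varphi_s(x_0)$); routing the argument through $(\varphi_s)_*\Delta^V$ therefore silently strengthens the hypotheses. Second, your observation that one should also normalize $\alpha_{s*}\Delta^H(\varphi_s(x_0))=\R^p\times\{0\}$ is a genuinely useful remark --- without it the condition $\det\frac{\partial\varphi_s^2}{\partial y}\neq 0$ is not invariant under the allowed chart changes --- and this is something the paper leaves implicit; but it does not substitute for the missing explicit factorization.
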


\begin{proof} Straightforward
by the inverse function theorem: The local decomposition exists if and only if
the $(n-p)\times (n-p)$ submatrix $\frac{\partial \varphi^2(x,y)}{\partial y}$ 
is
invertible. That is, $ \varphi = (\xi^1 (x,y), Id_2) \circ (Id_1, \varphi^2
(x,y))$, where $Id_1$ and $Id_2$ are the identities in $\R^p$ and in
$\R^{n-p}$,
respectively.

\end{proof}

The proposition above says that the
hypothesis for the decomposition can be weakened to conditions only on
the original flow
$\varphi_t$ instead of using the transversality preserving property (Definition
\ref{Def:
transversality}).

\begin{corollary} Suppose that the vertical distribution is integrable
(a foliation). If the the original flow $\varphi_t$ sends each
vertical
leaf into a vertical leaf, then the decomposition $\varphi_t= \xi_t
\circ \Psi_t$ holds for all time $t\geq 0$.
\end{corollary}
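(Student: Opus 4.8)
The plan is to reduce the statement to the previous proposition, so the task is only to verify that the integrability hypothesis plus the leaf-preserving property forces the relevant Jacobian block to be invertible for all time. First I would fix an initial condition $x_0 \in M$ and choose, as in the setup preceding the proposition, a local coordinate chart $\alpha_0 : U_0 \to \R^p \times \R^{n-p}$ adapted to the vertical distribution, i.e. $(\alpha_0)_* \Delta^V = \{0\}\times \R^{n-p}$. Since $\Delta^V$ is now assumed integrable, Frobenius' theorem lets me do better: I can choose the chart so that the vertical leaves through all points of $U_0$ are exactly the plaques $\{x\}\times \R^{n-p}$, and similarly choose adapted charts $\alpha_t$ around $\varphi_t(x_0)$ in which vertical leaves are again horizontal slices $\{x\}\times \R^{n-p}$.

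Next I would translate the leaf-preserving hypothesis into coordinates. Writing $\varphi_t = (\varphi_t^1(x,y),\varphi_t^2(x,y))$ in these adapted charts, the assumption that $\varphi_t$ maps each vertical leaf into a vertical leaf means that the image of a slice $\{x\}\times \R^{n-p}$ lies inside a single slice $\{x'\}\times \R^{n-p}$; equivalently the first component $\varphi_t^1(x,y)$ does not depend on $y$, so $\varphi_t^1 = \varphi_t^1(x)$. Consequently, for each fixed $x$, the map $y \mapsto \varphi_t^2(x,y)$ is the restriction of the diffeomorphism $\varphi_t$ to one leaf, composed with chart identifications — hence itself a diffeomorphism of (an open subset of) $\R^{n-p}$. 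Therefore its Jacobian $\partial \varphi_t^2(x,y)/\partial y$ is invertible at every $(x,y)$ in the domain, for every $t \geq 0$. Since the stopping time $\tau$ of the preceding proposition is precisely the first time this $(n-p)\times(n-p)$ block degenerates, we get $\tau = \infty$, i.e. the decomposition $\varphi_t = \xi_t \circ \Psi_t$ holds for all $t \geq 0$.

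I would then close by spelling out the factorization explicitly, matching the form in the proposition's proof: with $\varphi_t^1$ independent of $y$ one has
\[
\varphi_t = \big(\varphi_t^1(x),\, \mathrm{Id}_2\big) \circ \big(\mathrm{Id}_1,\, \varphi_t^2(x,y)\big),
\]
where the first factor preserves the vertical leaves' quotient and hence lies in $\mathrm{Diff}(\Delta^H, M)$ (it moves the leaf label $x$ only), while the second factor fixes each leaf setwise and acts inside it, hence lies in $\mathrm{Diff}(\Delta^V, M)$. Uniqueness of the decomposition (noted in the introduction, from local involutivity) guarantees that this coordinate factorization agrees with the intrinsic one $\xi_t \circ \Psi_t$, so the claim follows globally in time.

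The main obstacle I anticipate is not analytic but a matter of care with charts: the proposition and its coordinate description are phrased with the charts $\alpha_t$ varying with $t$ and only arranged to straighten $\Delta^V$ at the single point $\varphi_t(x_0)$, whereas to read off "$\varphi_t^1$ independent of $y$" cleanly I want charts in which whole families of vertical leaves are straightened simultaneously — which is exactly what integrability (Frobenius) buys me, but one must check that this stronger choice is still compatible with the hypotheses of the preceding proposition and does not affect the determinant condition defining $\tau$ (it does not, since invertibility of the block is chart-independent up to multiplication by invertible Jacobians of the coordinate changes). Once that bookkeeping is settled the argument is immediate.
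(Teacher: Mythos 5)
Your proposal is correct and follows essentially the same route as the paper: in coordinates adapted to the vertical foliation, leaf-preservation forces $\partial\varphi_t^1/\partial y=0$, so invertibility of the full Jacobian (the paper reads this off from the block-triangular form of $\varphi_{t*}$, you from the leaf restriction being a diffeomorphism between equidimensional leaves) gives $\det\big(\partial\varphi_t^2/\partial y\big)\neq 0$ for all $t$, and the preceding proposition yields $\tau=\infty$. The chart bookkeeping you flag is handled exactly as you suggest, via Frobenius.
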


\begin{proof}
Straightforward from the proposition above since, for all $t\geq 0$,
\[
 \varphi_{t*}= \left( \begin{array}{cc}
\frac{\partial \varphi_t^1(x,y)}{\partial x} & 0 \\
 & \\
\frac{\partial \varphi_t^1(x,y)}{\partial x} & \frac{\partial
\varphi_t^2(x,y)}{\partial y}
               \end{array} \right),
\]
hence
 $\det
\frac{\partial \varphi_t^2(x,y)}{\partial y}\neq 0$ for all $t\geq 0$.

\end{proof}

\noindent {\bf Example 1.} Consider in $\R^n\setminus
\{0\}$ the distribution $\Delta^H$ given by the spherical foliation and
the distribution $\Delta^V$ given by the radial foliation. Then, for this
pair of distribution, any linear  system $\varphi_t$ has the decomposition
$\varphi_t=\xi_t \circ \psi_t$
for all $t\geq 0$ since each radial line is sent to a radial line.

\bigskip

\noindent {\bf Example 2.} Most of relevant dynamics in fibre bundles send
fibre into another fibre. The fibres generate the vertical distributions,
the horizontal distributions are given by connections. Hence the decomposition
for these flows with respect to these pairs of distributions exists for
all $t\geq 0$.  A standard example in this context is the linearised flow
$\varphi_{t*}: T_{x_0}M \rightarrow T_{\varphi_t(x_0)}$ in the linear frame
bundle $\pi: BM \rightarrow M$. By the Corollary above, the
decomposition $\varphi_{t*}=\xi_t \circ \psi_t$ holds
for all $t\geq 0$ since $\varphi_{t*}$ is an isomorphism between the fibres
$\pi^{-1}(x_0)$  and $\pi^{-1}(\varphi_t(x_0))$.

%
%
%
%
%

\end{document}